\documentclass[letterpaper, 10 pt, conference]{ieeeconf}  

\IEEEoverridecommandlockouts                              

\usepackage{stfloats}
\usepackage{savesym}
\usepackage{amsmath}
\usepackage{textcomp}
\savesymbol{iint}
\restoresymbol{TXF}{iint}
\usepackage{graphicx,subfigure}
\usepackage{epstopdf}
\usepackage{amssymb,amsfonts}
\usepackage{cite}
\usepackage{latexsym}
\usepackage{psfrag}
\usepackage{pifont}
\usepackage{caption}
\usepackage{color}
\usepackage{tikz}
\usepackage{bm}
\usetikzlibrary{arrows,shapes,chains}
\usepackage{bbm,mathrsfs}%
\usepackage{fancyhdr} 
\usepackage{verbatim} 
\usepackage{xcolor}

\usepackage{setspace}
\usepackage{algorithm}
\usepackage{algpseudocode}

\let\labelindent\relax
\usepackage{enumitem}
\newtheorem{theorem}{Theorem}   
\newtheorem{lemma}{Lemma}

\newtheorem{example}{Example}
\newtheorem{remark}{Remark}

\newtheorem{definition}{Definition}

\title{\LARGE \bf
Fault Diagnosis and Prognosis in Partially-Observed Discrete Event Systems with Delayed Observations}

\author{~Jiwei Wang,~Simone Baldi,~Wenwu Yu,~Xiang Yin
	\thanks{This work was supported in part by the Jiangsu Provincial Scientific Research Center of Applied Mathematics grant BK20233002, and in part by the National Natural Science Foundation of China grants 62150610499, 62073074, 62073076, 62233004, 62173226.}
	\thanks{J. Wang is with School of Cyber Science and Engineering, Southeast University, Nanjing 210096, China (e-mails: jwwang@seu.edu.cn).}
	\thanks{S. Baldi and W. Yu are with School of Mathematics, Southeast University, Nanjing 210096, China (e-mails: \{simonebaldi,wwyu\}@seu.edu.cn).}
	\thanks{X. Yin is with the Department of Automation and Key Laboratory of System Control and Information Processing, Shanghai Jiao Tong University, Shanghai 200240, China (e-mail: yinxiang@sjtu.edu.cn).}
}

\begin{document}

\maketitle
\thispagestyle{empty}
\pagestyle{empty}

\begin{abstract}

Fault diagnosis and prognosis in discrete event systems are studied in the scenario where the observations are possibly received with delay.
To address this scenario, two conditions for diagnosis and prognosis with delayed observations are proposed, where we show that the state-of-the-art notion of prognosability must be revised to avoid conservativeness.
Diagnosability and prognosability conditions are then verified by introducing a delay observer and a new verification function. 
Theoretical analysis indicates the effectiveness of the verification method for fault diagnosis and prognosis in the system.
\end{abstract}

\section{INTRODUCTION}

Fault analysis is fundamental for system reliability and maintenance.
Discrete event systems (DES) \cite{cassandras2009introduction} provide a useful framework for fault analysis: structured DES approaches to fault analysis have been proposed using observations of the system to detect (fault diagnosis) or predict (fault prognosis) anomalies and deviations from healthy operation.
Notions of diagnosability \cite{sampath1995diagnosability} and prognosability \cite{genc2009predictability} have been presented for DES, where the former determines the ability to timely detect faults, while the latter determines the ability to forecast future faults based on current system conditions.
Prognosability is also called predictability in some literature.

With DES encompassing applications across various domains \cite{lin1994diagnosability}, \cite{he2024asynchronous}, the literature has proposed several settings for DES fault diagnosis and prognosis, spanning decentralized scenarios \cite{debouk2000coordinated,kumar2009decentralized}, robustness notions \cite{yin2019robust,liao2022robust}, fuzzy systems\cite{liu2009diagnosability,benmessahel2017predictability}, fault tolerant control \cite{paoli2011active,watanabe2017safe}, among others. 
The interested reader can refer to \cite{lafortune2018history,watanabe2021fault} for more details.
A fundamental setting in fault analysis is handling observations received with delay.
Such issues have been addressed in network systems \cite{nunes2018codiagnosability,liu2021online,viana2021codiagnosability}, and distributed scenarios with transmission delay \cite{qiu2008distributed,wang2024distributed,takai2011distributed}, introducing notions of codiagnosability and coprognosability.
Yet, these results do not encompass the delayed-observation scenario considered in this work. 
This point will be discussed making use of a suitable example (cf. Example 2\&3 in this manuscript).

To consider the delayed-observation scenario, one needs to extend state-of-the-art notions of diagnosability in $K$ steps, such as $K$-diagnosability \cite{cassez2008fault}. 
We do this by introducing a new $ K^T $-diagnosability, where $T$ is a delay-dependent parameter.
We show that the state-of-the-art notion of prognosability \cite{genc2009predictability} cannot be extended to the delayed-observation scenario. 
We thus propose a notion of prognosability that, while being equivalent to the state-of-the-art prognosability in the absence of delays, it can be extended to the delayed-observation scenario, resulting in $ T $-constrained prognosability.
We finally propose a delay observer structure to record the delays required for diagnosis and prognosis, which in turn leads to a new function to verify $ K^T $-diagnosability and $ T $-constrained prognosability.

The remainder of this paper is organized as follows.
In Section \ref{2}, the partially-observed DES is introduced.
Section \ref{3} recalls the state-of-the-art notions for diagnosability and prognosability, and proposes novel definitions of $ K^T $-diagnosability and $ T $-constrained prognosability.
In Section \ref{4}, we verify such conditions by introducing the delay observer and the verification function.
Section \ref{8} concludes this paper.

\section{Preliminaries}\label{2}

Let us start by considering a finite event set $E$ as an alphabet, enabling us to interpret the \emph{concatenation} of word strings within the alphabet as finite sequences of events in $E$.
Let $E^*$ denote the set of all finite strings over $E$. 
The length of a string $s\in E^*$ is denoted as $|s|$, and let $\epsilon$ represent the empty string with $|\epsilon| = 0$. 
A \emph{language} $L$ constitutes a collection of event strings, derived from events in $E$.
The \emph{prefix-closure} of a language $L$ is defined as $\overline{L}=\{s\in E^* | \exists t\in E^*, \text{ s.t. } st\in L\}$, and $L$ is \emph{prefix-closed} if $L=\overline{L}$.
The \emph{post-language} of $L$ following a string $s$ is denoted as $L\backslash s = \{s'\in E^*\mid ss'\in L\}$.
We say that a language $L$ is \emph{live} if, for every $s \in L$, there exists an $e \in E$ such that $se\in L$, representing that any string in $L$ can be extended to any length.

Automata are a common framework for manipulating languages, used to model DES.
Let us consider the finite automaton
\begin{equation}\label{S}
G=(X, E, \alpha, X_0),
\end{equation}
where $X$ is the set of finite states, $E$ is the set of finite events, $ \alpha:X \times E^* \rightarrow 2^X $ is the transition function describing the transition of an event string, $X_0 \subseteq X $ is the set of possible initial states.
The language generated by $G$ from state $x \in X$ is denoted by $\mathcal{L}(x,G)=\{s \in E^* | \alpha (x,s)! \}$, where $!$ indicates that the string $ s $ ``is defined'', meaning that it can occur starting from state $x$.
If $x_0 \in X_0$, we simply denote $\alpha(x_0,s)$ as $\alpha(s)$ and $\mathcal{L}(x_0,G)$ as $\mathcal{L}(G)$. 

In practice, the occurrence of certain events may not be detectable.
Accordingly, we partition $ E $ into the observable events $ E_o $ and the unobservable events $ E_{uo} $.
Let us now introduce two projection operators.
The operator $ P_{E_o} $ is utilized for acquiring the observable events in an event string:
$\forall s\in \mathcal{L}(G), \forall e\in E: \alpha(se)!, $
\begin{equation}
P_{E_o}(\epsilon)= \epsilon, P_{E_o}(se)=\left\{
\begin{aligned}
& P_{E_o}(s)e, &\text{if} \ e\in E_o; \\
& P_{E_o}(s),  &\text{if} \ e\notin E_o.
\end{aligned}
\right.
\end{equation}
Intuitively, for any system trajectory $ s\in \mathcal{L}(G) $, $ P_{E_o}(s) $ gives the observed events.
We extend $ P_{E_o} $ to handle a set of event string, that is, $ \forall S \subseteq \mathcal{L}(G) $, $ P_{E_o}(S)=\{ s\in E_o^*| \exists s' \in S, \text{ s.t. } s=P_{E_o}(s') \} $.
We then introduce the operator $ \zeta^n_{E_o} $: 
$ \forall s \in \mathcal{L}(G) $,
\begin{equation*}
\zeta^n_{E_o}(s) \!=\! \{ s'' \!\in\! \overline{s} | \exists s' \!\in\! \overline{s}: |s'| \!\geq\! |s|-n, \text{s.t.} P_{E_o}(s'') \!=\! P_{E_o}(s') \},
\end{equation*}
where $ n\in\mathbb{N} $ is the number of steps.
Intuitively, $ \zeta^n_{E_o} $ collects all prefixes of $ s $ which have the same observation as a system trajectory $ s'\in\overline{s} $ under the observation ability $ E_o $.
Obviously, given $ n_1\geq n_2 $, we have $ \zeta^{n_2}_{E_o}(s)\subseteq\zeta^{n_1}_{E_o}(s) $.

\subsection{Illustrative example}
\label{IE}
Throughout this paper, we take an air heating unit example to illustrate the key concepts.
The example describes a start-up process of the unit via the automaton $ G $ in Fig. \ref{System}, where the system is off initially (in state $ x_0 = \{0\} $).
In healthy conditions, after turning the air heating unit on, the air flow sensor 2 observes that the flow rate is regular ($ e_2, 0 \to 1 $: $ e_2 $ occurs and the system state goes from $ 0 $ to $ 1 $); after some time, the temperature sensor 1 observes that the desired temperature is reached ($ e_1, 1 \to 2 $).
Then, sensor 2 keeps monitoring the flow rate ($ e_2, 2 \to 2 $).
However, if the air quality sensor 3 observes an abnormal amount of dust (e.g., possibly due to aging or some leak in the duct) ($ e_3, 0 \to 3 $), after some unobservable anomalies (e.g., unusual vibration of the fan) ($ u, 3 \to 4 $), the fan may fail or clog ($ f, 4 \to 5 $).
As a result, sensor 1 observes high temperature ($ e_1, 5 \to 6 $), leading to overheating of the coil with no flow.
This damage may be irreversible even in the scenario that the fan recovers from the clog and restores the flow rate ($ e_2, 6 \to 6 $).
	
\begin{example}\label{ES}(System model).
	Using the automaton formalism in \eqref{S}, we have \begin{equation*}
	G=(\{0,1,2,3,4,5,6\}, \{e_1,e_2,e_3,u,f\}, \alpha, \{0\}).
	\end{equation*}
	When the system is in state $ 5 $, the only event that can occur is $ e_1 $, that is, $ \alpha(5,e_1)! $. 
	Let $ E_{o} = \{e_1, e_2, e_3\} $ be the observable events.
	Then, for the string $ e_3ufe_1e_2 $ generated by $ G $, we have
	\begin{align*}
	P_{E_o}(e_3ufe_1e_2) &= e_3e_1e_2,\\ 
	\zeta^0_{E_o}(e_3ufe_1e_2) &= \{e_3ufe_1e_2\},\\
	\zeta^2_{E_o}(e_3ufe_1e_2) &= \{e_3,e_3u,e_3uf,e_3ufe_1,e_3ufe_1e_2\}.
	\end{align*}
\end{example}

\begin{figure}[htbp]
	\centering
	\subfigure[Air heating unit]
	{\label{thermostat}
		\includegraphics[height=1.7cm]{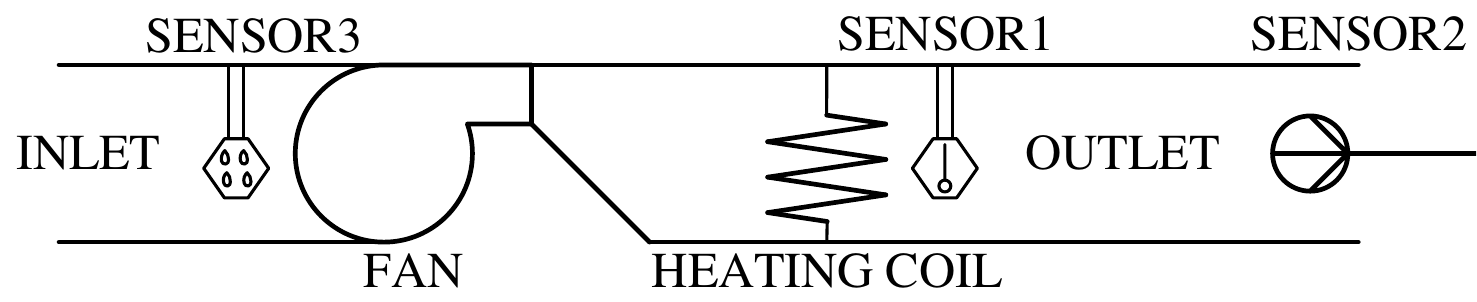}}
	\subfigure[Model $ G $ of Air Heating Unit]
	{\label{System}
		\includegraphics[height=1.6cm]{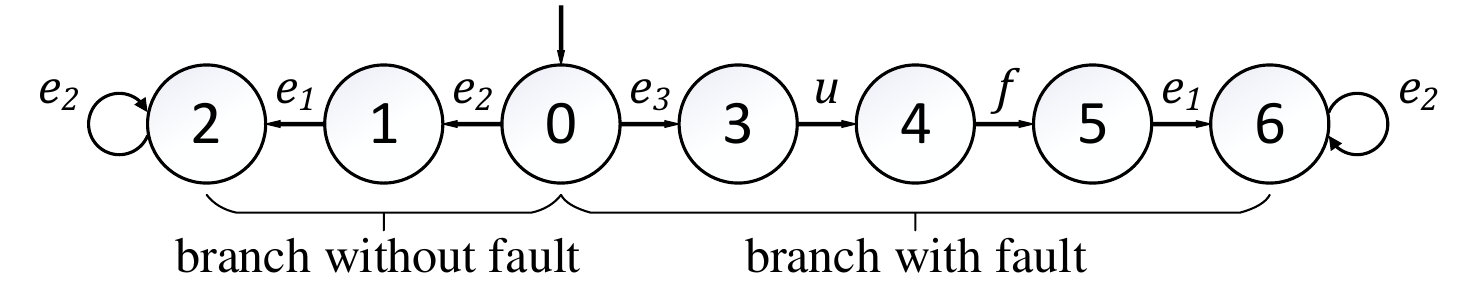}}
	\caption{Air heating unit and the model $ G $ of its start-up process.}
\end{figure}

\section{Diagnosability and Prognosability with Delayed Observation}\label{3}

In this section, we first recall and then extend state-of-the-art notions of $ K $-diagnosability and prognosability.

\subsection{Diagnosis with Delay}
Let $K$ be the maximum number of steps permitted from the occurrence of a fault to its diagnosis.
Let $G=(X, E, \alpha, x_0)$ be the system model.
We introduce a structure of step counter $ \Delta: \mathcal{L}(G) \to \{-1, 0, 1, \dots, K\} $ to count the number of steps in an event string after a fault event $f$ occurs: $ \forall s \in \mathcal{L}(G), \forall e\in E: \alpha(se)! \Rightarrow \Delta(\epsilon)=-1, \Delta(se)= $
\begin{align}\label{Delta}
\left\{
\begin{aligned}
& \Delta(s), &\text{if}\ & [\Delta(s) \!=\! -1 \wedge e \!\neq\! f ]\vee [\Delta(s) \!=\! K]; \\
& \Delta(s) \!+\! 1, &\text{if}\ &  [\Delta(s) \!=\! -1 \wedge e \!=\! f ]\vee [0 \!\leq\! \Delta(s) \!<\! K];
\end{aligned}
\right.
\end{align}
where $ -1 $ means no fault happens.
By means of $ \Delta $, let us recall the notion of $K$-diagnosability.
\begin{definition}\label{KD}
	($ K $-diagnosability\cite{cassez2008fault})  
	For $ K \in \mathbb{N} $, a live language $ \mathcal{L}(G) $ is $ K $-diagnosable w.r.t. $ f $ if $ \forall s \in \mathcal{L}(G): \Delta(s) = K $,
	\begin{equation}\label{KDC}
	\forall s'\in \mathcal{L}(G): P_{E_o}(s')=P_{E_o}(s) , \Delta(s') \neq -1.
	\end{equation}
\end{definition}
\vspace{2mm}

Intuitively, \eqref{KDC} means that any string with the same observation as $ s $ must have the fault event $ f $ in it, which implies that the fault can be always diagnosed after it occurs within $ K $ steps.

Now we consider the scenario that the observed signals are processed and transferred with an unknown delay.
We denote $ D\geq0 $ as the transmission distance, and $ T>0 $ as a coefficient related to transmission efficiency.
If the observation site observes an event, then it will be received with a delay of no more than $ \lceil \frac{D}{T} \rceil $ steps ($ \lceil \cdot \rceil $ rounds the element to the nearest integer towards infinity).
We give the following definition for fault diagnosis with delayed observation.
\begin{definition}\label{KTCD}
	($ K^{T} $-diagnosability) For $ K \in \mathbb{N} $, $ T > 0 $ and $ D\geq0 $, a live language $ \mathcal{L}(G) $ is $ K^{T} $-diagnosable w.r.t. $ f $ and $ D $ if $ \forall s \in \mathcal{L}(G): \Delta(s) = K, $
	\begin{equation}\label{KTCDC}
	\forall s'\!\in\! \mathcal{L}(G)\!:\! P_{E_o}(\zeta_{E_o}^{\lceil \frac{D}{T} \rceil}(s')) \cap P_{E_o}(\zeta_{E_o}^{\lceil \frac{D}{T} \rceil}(s)) \!\neq\! \emptyset,	\Delta(s') \!\neq\! -1.
	\end{equation}
\end{definition}

\begin{remark}
	By comparing \eqref{KDC} with (\ref{KTCDC}), the presence of delay enlarges the state set that the system may be in.
	If (\ref{KTCDC}) holds, all key events to determine the occurrence of $ f $ can be timely received.
	Intuitively, $ K^{T} $-diagnosability allows that any faulty string ($ s $ satisfying $ \Delta(s) = K $) can be distinguished from a healthy string ($ s $ satisfying $ \Delta(s) = -1 $), despite the delay. 
	As $ T $ increases, the strings that cannot be distinguished from the string $ s: \Delta(s) = K $ become less and less, that is, less strings would satisfy (\ref{KTCDC}).
	As expected, $ K^{T'} $-diagnosability implies $ K^{T} $-diagnosability when $ T' \leq T $ (higher transmission efficiency improves diagnosis ability).
	Compared to existing work on diagnosability subject to observation delays \cite{viana2021codiagnosability}, Definition \ref{KTCD} focuses on determining whether key events for diagnosis can be received in a timely manner.
	\hfill \ensuremath{\Box}
\end{remark}

\begin{example}\label{EKTD}($ K^{T} $-diagnosability in the presence of delay).
	Consider the air hearing unit in Sect. \ref{IE}.
	Delayed observations may happen when the air heating unit is part of a central energy management system where all observed signals are processed and transferred.
	Suppose $ D=3 $ and $ K=3 $.
	For $ T=1 $, the occurrence of the events in $ E_o $ will be received in no more than $ \lceil \frac{D}{T} \rceil=3 $ steps.
	Then, for the string $ s=e_3ufe_1e_2e_2 $ generated by $ G $, we have $ \Delta(s) = 3 $ and $ P_{E_o}(\zeta_{E_o}^{3}(s)) = \{e_3,e_3e_1,e_3e_1e_2,e_3e_1e_2e_2\} $.
	Since $ P_{E_o}(\zeta_{E_o}^{3}(s)) \cap P_{E_o}(\zeta_{E_o}^{3}(e_3))= \{e_3\} $ and $ \Delta(e_3)=-1 $, we have that $ \mathcal{L}(G) $ is not $ K^{T} $-diagnosable w.r.t. $ f $ and $ D $ for $ K=3,T=1,D=3 $.
	However, for $ T=2 $, we have $ \lceil \frac{D}{T} \rceil=2 $ steps and $ P_{E_o}(\zeta_{E_o}^{2}(s)) = \{e_3e_1,e_3e_1e_2,e_3e_1e_2e_2\} $.
	For any string $ s' $ with $ \Delta(s')=-1 $, we have
	$ P_{E_o}(\zeta_{E_o}^{2}(s'))\cap P_{E_o}(\zeta_{E_o}^{2}(s)) = \emptyset $, implying that $ \mathcal{L}(G) $ is $ K^{T} $-diagnosable w.r.t. $ f $ and $ D $ for $ K=3,T=2,D=3 $.
	\hfill \ensuremath{\Box}
\end{example}

\subsection{Prognosis with Delay}

Prognosability allows faults to be prognosed before their occurrence.
The literature has introduced the following notion of prognosability.
\begin{definition} \label{Dprog}
	(Prognosability \cite{genc2009predictability}). A live language $ \mathcal{L}(G) $ is prognosable w.r.t. $ f $ if $ \forall sf \in \mathcal{L}(G): s\in E^* $, 
	\begin{equation}\label{PC}
	\begin{aligned}
	&\exists s' \in \bar{s}: \Delta(s')=-1 , \text{s.t. } \forall t\in \mathcal{L}(G):  \\
	&\Delta(t)=-1 \wedge P_{E_o}(t) = P_{E_o}(s'), \exists n\in\mathbb{N}, \text{s.t.} \\ 
	&\forall t'\in\mathcal{L}(G)\backslash t, [ |t'|\geq n \Rightarrow \Delta(tt')\geq0 ].
	\end{aligned}
	\end{equation}
\end{definition}
\vspace{2mm}

Intuitively, \eqref{PC} means that any string that has the same observation as $ s'\in \bar{s} $ will always lead to the occurrence of $ f $, implying that we definitely know that the fault will happen after observing $ P_{E_o}(s') $.
Later we will show with an example that Definition \ref{Dprog} cannot handle delay well. 
We now introduce a different definition of prognosability that, while being equivalent to Definition \ref{Dprog} in the absence of delays, is extendable in the scenario of delayed observation.
\begin{definition} \label{DprogR}
	(Prognosability). A live language $ \mathcal{L}(G) $ is prognosable w.r.t. $ f $ if $ \forall s \in \mathcal{L}(G): \Delta(s)=0 $,
	\begin{equation}\label{PCR}
		\begin{aligned}
		&\exists s' \in \bar{s}, \text{s.t.} \forall t\in \mathcal{L}(G): \Delta(t)=-1 \wedge P_{E_o}(t) = P_{E_o}(s'),\\
		&\exists n\in\mathbb{N}, \text{s.t.} \forall t'\in\mathcal{L}(G)\backslash t, [ |t'|\geq n \Rightarrow \Delta(tt')\geq0 ].
		\end{aligned}
	\end{equation}
\end{definition}

The following lemma shows that Definition \ref{DprogR} is equivalent to Definition \ref{Dprog}.
\begin{lemma}
	Let $G$ in (\ref{S}) be the system model, $f$ be the fault events.
	Then, for any $ sf \in \mathcal{L}(G) $, \eqref{PC} holds if and only if for any $ s \in \mathcal{L}(G): \Delta(s)=0 $, \eqref{PCR} holds.
\end{lemma}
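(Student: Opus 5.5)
The plan is to prove both implications directly, using one structural observation about the step counter $\Delta$. By \eqref{Delta}, $\Delta(s)=0$ holds exactly when $s=uf$ with $\Delta(u)=-1$, i.e. $f$ occurs in $s$ only as its last event. More generally, $\Delta(w)=-1$ holds iff $w$ contains no $f$, and every prefix of a fault-free string is fault-free. As is standard for fault events, I will assume $f\in E_{uo}$. Then $P_{E_o}(uf)=P_{E_o}(u)$, and the inner condition of \eqref{PC}/\eqref{PCR} depends on $s'$ only through $P_{E_o}(s')$. I would state this dependence explicitly as a preliminary remark.

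\emph{(\eqref{PC} $\Rightarrow$ \eqref{PCR}).} Let $s\in\mathcal{L}(G)$ with $\Delta(s)=0$, and write $s=uf$ with $\Delta(u)=-1$. Since $uf\in\mathcal{L}(G)$, \eqref{PC} applied to $uf$ yields some $s'\in\bar{u}$ with $\Delta(s')=-1$ satisfying the inner condition. Since $\bar u\subseteq\bar{s}$, this $s'$ witnesses \eqref{PCR}; that direction drops the requirement $\Delta(s')=-1$, so nothing more is needed.

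\emph{(\eqref{PCR} $\Rightarrow$ \eqref{PC}).} Let $sf\in\mathcal{L}(G)$ be arbitrary; $s$ may itself contain $f$. Let $uf$ be the shortest prefix of $sf$ ending in $f$. Then $\Delta(u)=-1$, $\Delta(uf)=0$, and $u\in\bar{s}$; when $s$ is fault-free, $u=s$. Applying \eqref{PCR} to $uf$ gives some $s''\in\overline{uf}$ satisfying the inner condition. There are two cases.
\begin{itemize}
\item If $s''\in\bar u$, it is fault-free, so $\Delta(s'')=-1$, and $s''\in\bar s$.
\item If $s''=uf$, replace it by $s'=u$. This is legitimate because $P_{E_o}(u)=P_{E_o}(uf)$, so the set of $t$ quantified over, and hence the whole inner condition, is unchanged. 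Moreover $\Delta(u)=-1$ and $u\in\bar s$.
\end{itemize}
In either case \eqref{PC} holds for $sf$.

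The main obstacle is this replacement step: \eqref{PCR} allows the witness $s'$ to be the faulty string $uf$ itself, whereas \eqref{PC} demands a fault-free witness. The argument rests on $f$ being unobservable. If $f$ were observable, I would instead check directly whether $s'=uf$ can satisfy the inner condition: any $t$ with $\Delta(t)=-1$ has $P_{E_o}(t)$ containing no $f$, so $P_{E_o}(t)\neq P_{E_o}(uf)$. The condition would then hold vacuously, and the equivalence could fail. So the proof must fix $f\in E_{uo}$, as is implicit in the paper's setting. The remaining subtlety is prefixes containing earlier faults, which is handled by choosing the first occurrence of $f$.
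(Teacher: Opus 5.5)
Your proof is correct and follows the paper's route. The forward direction is immediate because \eqref{PCR} drops the requirement $\Delta(s')=-1$. For the converse you pass to the first occurrence of $f$ and replace a faulty witness $uf$ by $u$ via $P_{E_o}(uf)=P_{E_o}(u)$, which is exactly the paper's $s_1f\mapsto s_1$ step.

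You are right to make $f\in E_{uo}$ explicit: the paper uses it silently, and the equivalence fails without it. One small nit: your characterization ``$\Delta(s)=0$ iff $s=uf$ with $u$ fault-free'' needs $K\geq 1$, since for $K=0$ the counter sticks at $0$. This is harmless, because in the forward direction you can again take the first $f$ in $s$ and use prefix-closure.
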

\begin{proof}
	As necessity is obvious, let us only consider sufficiency.
	Suppose that for any $ s \in \mathcal{L}(G): \Delta(s)=0 $, \eqref{PCR} holds.
	We first show that for any $ s \in \mathcal{L}(G): \Delta(s)=0 $, \eqref{PC} holds.
	Consider $ s' $ in \eqref{PCR}.
	If $ \Delta(s')=-1 $, then \eqref{PC} holds directly.
	If $ \Delta(s')\neq-1 $, then there exists $ s_1\in \overline{s'}\subseteq\bar{s} $ such that $ \Delta(s_1f)=0 $, which implies that
	\begin{equation}\label{Lp}
	\begin{aligned}
	&\exists s_2\in \overline{s_1f}, \text{s.t.} \forall t\in \mathcal{L}(G): \Delta(t)=-1 \wedge P_{E_o}(t) = P_{E_o}(s_2),\\
	&\exists n\in\mathbb{N}, \text{s.t.} \forall t'\in\mathcal{L}(G)\backslash t, [ |t'|\geq n \Rightarrow \Delta(tt')\geq0 ].
	\end{aligned}
	\end{equation}
	Obviously, the only case that $ \Delta(s_2)\neq-1 $ is $ s_2=s_1f $.
	Because $ P_{E_o}(s_1f) = P_{E_o}(s_1) $, we can set $ s_2=s_1 $ to make \eqref{Lp} hold.
	Since $ s_2\in \overline{s_1f}\subseteq\bar{s} $, we conclude that for any $ s \in \mathcal{L}(G): \Delta(s)=0 $, \eqref{PC} holds.
	We then show that for any $ sf \in \mathcal{L}(G): s\in E^* $, \eqref{PC} holds.
	If $ \Delta(s)=-1 $, then $ \Delta(sf)=0 $, implying that \eqref{PC} holds.
	If $ \Delta(s)\neq-1 $, then there exists $ s_1\in \bar{s} $ such that $ \Delta(s_1f)=0 $, which also implies that \eqref{PC} holds.
\end{proof}

Now we extend Definition \ref{DprogR} in the scenario of delayed observation, when the message from the observation site can be received in no more than $ \lceil \frac{D}{T} \rceil $ steps.
\begin{definition}\label{DTP}
	($ T $-constrained prognosability). For $ T > 0 $ and $ D\geq0 $, a live language $ \mathcal{L}(G) $ is $ T $-constrained prognosable w.r.t. $ f $ and $ D $ if $ \forall s \in \mathcal{L}(G): \Delta(s)=0 $,
	\begin{equation}\label{TPC}
	\begin{aligned}
	&\exists s' \in \bar{s}, \text{ s.t. } \forall t\in \mathcal{L}(G): \Delta(t)=-1 \wedge \\
	&P_{E_o}(\zeta_{E_o}^{\lceil \frac{D}{T} \rceil}(t))\cap P_{E_o}(\zeta_{E_o}^{\lceil \frac{D}{T} \rceil}(s'))\neq\emptyset, \exists n\in\mathbb{N}, \\ 
	&\text{s.t. } \forall t'\in\mathcal{L}(G)\backslash t, [ |t'|\geq n \Rightarrow \Delta(tt')\geq0 ].
	\end{aligned}
	\end{equation}
\end{definition}\vspace{2mm}

\begin{remark}
	Due to the transmission delay, receiving the occurrence of an event before the fault $ f $ does not mean that $ f $ has not occurred yet.
	Hence, condition \eqref{TPC} means that any string that may lead to the same received observation as $ s'\in \bar{s} $ will always lead to the occurrence of $ f $.
	If (\ref{TPC}) holds, all key events to determine if $ f $ will occur can be timely received.
	In other words, we definitely know that $f$ will occur before its occurrence despite the delay. 
	Note that $ T' $-constrained prognosability implies $ T $-constrained prognosability when $ T' \leq T $.
	Compared to existing work on prognosability subject to observation delays \cite{takai2011distributed}, Definition \ref{DTP} provides a less conservative definition, as illustrated below.
\end{remark}


\begin{example}\label{ETDP}($ T $-constrained prognosability in the presence of delay).
	Consider the system $ G $ in Fig. \ref{System} and the string $ s=e_3uf $ generated by $ G $.
	Suppose $ D=3 $ and $ T=2 $.
	Then, we know that $ e_3 $ will be received within $ \lceil \frac{D}{T} \rceil=2 $ steps before the occurrence of $ f $, that is, $ f $ can be prognosed.
	Given $ P_{E_o}(\zeta_{E_o}^{2}(e_3uf)) = \{e_3\} $, we have that the set of strings $ t $ satisfying $ \Delta(t)=-1 \wedge P_{E_o}(\zeta_{E_o}^2(t))\cap \{e_3\}\neq\emptyset $ is $ \{e_3,e_3u\} $, which will lead to $ f $ in 2 steps.
	Hence, we conclude that $ \mathcal{L}(G) $ is $ T $-constrained prognosable w.r.t. $ f $ and $ D $ for $ T=2 $ and $ D=3 $.
	Let us now consider the extension from Definition \ref{Dprog}, which is equivalent to \cite[Definition 3]{takai2011distributed} in the centralized case: 
	$ \forall sf \in \mathcal{L}(G): s\in E^* $, 
	\begin{equation}\label{0}
	\begin{aligned}
	&\exists s' \in \bar{s}: \Delta(s')=-1 , \text{s.t. } \forall t\in \mathcal{L}(G):  \\
	&\Delta(t)=-1 \wedge P_{E_o}(\zeta_{E_o}^{\lceil \frac{D}{T} \rceil}(t))\cap P_{E_o}(\zeta_{E_o}^{\lceil \frac{D}{T} \rceil}(s'))\neq\emptyset, \\ 
	& \exists n\in\mathbb{N}, \text{s.t.} \forall t'\in\mathcal{L}(G)\backslash t, [ |t'|\geq n \Rightarrow \Delta(tt')\geq0 ].
	\end{aligned}
	\end{equation}
	For the string $ s=e_3uf $, we have that the set of strings $ s'\in \overline{s}: \Delta(s')=-1 $ is $ \{\epsilon,e_3,e_3u\} $ and $ \forall s'\in \{\epsilon,e_3,e_3u\}, \epsilon\in P_{E_o}(\zeta_{E_o}^2(s')) $.
	However, if we consider the string $ e_2 $, we have $ \epsilon\in P_{E_o}(\zeta_{E_o}^2(e_2)) $ and \begin{equation*}
	\forall n\in\mathbb{N}, \exists t'\in\mathcal{L}(G)\backslash e_2:|t'|\geq n, \text{s.t.} \Delta(e_2t')=-1,
	\end{equation*}
	implying that $ s $ does not satisfy \eqref{0}.
	In other words, the extension \eqref{0} of Definition \ref{Dprog} fails to capture that $ f $ can be prognosed. 
	This justifies the introduction of Definition \ref{DprogR} and its extension in Definition \ref{DTP}.
	\hfill \ensuremath{\Box}
\end{example}

\section{Verification of Diagnosability and Prognosability} \label{4}

In general, it is intractable to determine if a fault can be diagnosed or prognosed by analysing each event string. 
It is necessary to embed the delay information into the automaton and develop a feasible method to verify $ K^T $-diagnosability and $ T $-constrained prognosability.
This is done by linking the system states to fault events and by building a delay observer structure to handle the delays.

\subsection{Delay Observer}\label{SDO}

A delay observer aims to register the delays of the events that help to distinguish the faulty from the healthy strings.
To construct a delay observer, a fault automaton structure \cite{yin2015codiagnosability} is utilized to track the number of steps following a fault occurrence
\begin{equation}\label{FA}
\hat{G}=(\hat{X}, E, \hat{\alpha}, \hat{x}_{0}),
\end{equation}
where $ \hat{X} = X\times \{-1, 0, 1, \dots, K\} $ comprises the state in $ X $ of $ G $ along with the fault counting component, denoted as $ \hat{x} = (x,|\hat{x}|_f) \in \hat{X} $ where $ |\hat{x}|_f $ represents the number of steps following a fault occurrence.
The transition function $\hat{\alpha}: \hat{X} \times E \to \hat{X} $ is specified as follows: $ \forall \hat{x}=(x,|\hat{x}|_f)\in \hat{X}, e\in E:\alpha(x,e)!, $
\begin{equation*}
 \hat{\alpha}((x,|\hat{x}|_f),e) = (\alpha(x,e),|\hat{x}|_f+v),
\end{equation*}
where $ |\hat{x}|_f \in \{-1, 0, 1, \dots, K\} $ and $ v $ is determined by
\begin{equation*}
v = \left\{
\begin{aligned}
& 0 ,\quad \text{if} \ [|\hat{x}|_f = -1 \wedge e \neq f ]\vee [|\hat{x}|_f = K];\\
& 1 ,\quad \text{if} \ [|\hat{x}|_f = -1 \wedge e = f ]\vee [0 \leq |\hat{x}|_f < K].
\end{aligned}
\right.
\end{equation*}
The initial state is $ \hat{x}_{0} =(x_0, -1)$.
Note that $ \mathcal{L}(\hat{G}) = \mathcal{L}(G) $.
In both diagnosis and prognosis problems, we aim to distinguish two set of states, denoted by $ \hat{X}^F $ (faulty) and $ \hat{X}^H $ (healthy).
In the diagnosis problem, we distinguish 
\begin{equation}\label{diag}
\begin{aligned}
\hat{X}^F&=\{\hat{x}\in \hat{X}\mid |\hat{x}|_f=K\},\\
\hat{X}^H&=\{\hat{x}\in \hat{X}\mid |\hat{x}|_f=-1\},
\end{aligned}
\end{equation}
while in the prognosis problem, we distinguish
\begin{equation}\label{prog}
\begin{aligned}
\hat{X}^{F}=\{&\hat{x}\in \hat{X}\mid |\hat{x}|_f=0\},\\
\hat{X}^H=\{&\hat{x}\in \hat{X} \mid \forall n\in \mathbb{N}, \exists s\in\mathcal{L}(\hat{x},\hat{G}): |s|>n,\\
&\text{s.t. } \Delta(s)=-1\}.
\end{aligned}
\end{equation}

The delay observer to determine the delays to be recorded is defined as an automaton
\begin{equation}\label{DO}
\mathcal{O}_{E_o}(\hat{G}) = (Y, E_o, \beta, y_{0}),
\end{equation}
where each state $ y\in 2^{\hat{X} \times \{0, 1, \dots, \lceil \frac{|a_1a_2|}{T} \rceil, \infty\}} $ represents the state estimation with delay value.
The transition function $ \beta: Y \times E_o \to Y $ and the initial state $ y_{0} $ are built as in Algorithm \ref{AOS}.
Algorithm \ref{AOS} collects the initial states with the delay value $ \infty $ into $y_0 $, and uses two iterative procedures to build the delay observer.
The procedure \texttt{RECORD1}($ \hat{x}, u, y $) collects the state estimation for $ y $ while appending the delay value to each state in the state estimation and collecting them in $ y $.
The procedure \texttt{RECORD2}($ y $) explores new transitions and observer states while appending the delay value to the ``initial'' states in the state estimation.

\makeatletter
\def\BState{\State\hskip-\ALG@thistlm}
\makeatother

\begin{algorithm} [htbp]
	\begin{algorithmic}[1]
		\caption{Construction of the delay-observer $ \mathcal{O}_{E_o}(\hat{G}) $
		} \label{AOS}
		\Require
		$ \hat{G} = (\hat{X}, E, \hat{\alpha}, \hat{X}_{0}), \lceil \frac{D}{T} \rceil, E_o $, $ \hat{X}^F, \hat{X}^H $;
		\Ensure
		$ \mathcal{O}_{E_o}(\hat{G}) = (Y, E_o, \beta, y_{0}) $;
		
		\State $ y_0 \gets \emptyset $;
		\For {$ \hat{x}_0 \in \hat{X}_0 $}
		\State $ y_0 \gets y_0 \cup (\hat{x}_0,\infty) $; \texttt{RECORD1}($\hat{x}_0,\infty,y_0$);
		\EndFor
		\State  $ Y \gets \{y_0\} $; 
		\texttt{RECORD2}$ (y_{0}) $; 
		\textbf{Return} $ \mathcal{O}_{E_o}(\hat{G}) $;
		\vspace{0.6ex}
		\Procedure{\texttt{RECORD1}}{$ \hat{x}, u, y $}
		\For {$ e\in E_{uo} :\hat{\alpha}(\hat{x},e)! $}
		\For{$ \hat{x}'\in \hat{\alpha}(\hat{x},e) $}
		\If {$ 0<u<\infty $}
		\State $ u'\gets u-1 $;
		\Else
		\State $ u'\gets u $;
		\EndIf
		\If {$ (\hat{x}',u')\notin y $}
		\State $ y\gets y\cup \{(\hat{x}',u')\} $; 
		\State \texttt{RECORD1}$ (\hat{x}', u', y) $;
		\EndIf
		\EndFor
		\EndFor
		\EndProcedure	
		\vspace{0.6ex}
		\Procedure{\texttt{RECORD2}}{$ y $}
		\State $ \iota\gets\{\hat{x}\in\hat{X}\mid(\hat{x},u)\in y\} $; $ E^{\rm Rec}\gets \emptyset $;
		\For {$ e \in E_o $}
		\State $ \iota^*\gets \{\hat{x}\in\hat{X}\mid \exists \hat{x}'\in\iota, \ \text{s.t.}\ \hat{x}\in\hat{\alpha}(\hat{x}',e) \} $;
		\If {$ \iota^*\cap \hat{X}^H=\emptyset $}
		\State $ E^{\rm Rec}\gets E^{\rm Rec}\cup\{e\} $;
		\EndIf
		\State $ y^e\gets \emptyset $;
		\EndFor
		\For {$ (\hat{x},u)\in y $}
		\For {$ e\in E_o:\hat{\alpha}(\hat{x},e)! $}
		\If {$ u=\infty \wedge e \in E^{\rm Rec} $}
		\State $ u'\gets \lceil \frac{D}{T} \rceil $;
		\ElsIf {$ 0<u<\infty $}
		\State $ u'\gets u-1 $;
		\Else
		\State $ u'\gets u $;
		\EndIf
		\For{$ \hat{x}'\in \hat{\alpha}(\hat{x},e) $}
		\If {$ (\hat{x}',u')\notin y^e $}
		\State $ y^e \gets y^e \cup \{(\hat{x}',u')\} $;
		\State \texttt{RECORD1}$ (\hat{x}', u', y^e) $;
		\EndIf
		\EndFor
		\EndFor
		\EndFor
		\For {$ e \in E_o $}
		\State Add $ \beta(y,e)=y^e $ to $ \mathcal{O}_{E_o}(\hat{G}) $;
		\If {$ y^e \notin Y $}
		\State $ Y \gets Y \cup \{y^e\} $; 
		\texttt{RECORD2}$ (y^e) $;
		\EndIf
		\EndFor
		\EndProcedure
	\end{algorithmic}
\end{algorithm}

The delay observer has the same structure as a classical observer \cite{cassandras2009introduction}, where the only difference lies in the states with the assigned delay value.
Given any $ y\in Y $, any pair of two states $ (\hat{x},u), (\hat{x}',u') \in y $ satisfies that there exist $ s,s'\in\mathcal{L}(G): \hat{x}\in\hat{\alpha}(s), \hat{x}'\in\hat{\alpha}(s') $ such that $ P_{E_o}(s)=P_{E_o}(s') $.
For each $ s\in\mathcal{L}(G): \hat{\alpha}(s)\subseteq\hat{X}^F $, $\mathcal{O}_{E_o}(\hat{G})$ records the delay of the events in $ E_o $ that help to distinguish $ s $ from the system trajectories $ s'\in\mathcal{L}(G): \hat{\alpha}(s')\cap\hat{X}^H\neq\emptyset $.
This is needed to verify $ K^T $-diagnosability and $ T $-constrained prognosability, as it will be clear in the next section.

\begin{example}\label{EDO}(Building the delay observer).
	For the system $ G $ in Fig. \ref{System}, we consider $ D=3 $, $ K=3 $ and $ T=2 $, and build the corresponding fault automaton $ \hat{G} $ as shown in Fig. \ref{FFA}.
	Given $ \lceil \frac{D}{T} \rceil=2 $, $ E_o=\{e_1,e_2,e_3\} $ and $ \hat{X}^F, \hat{X}^H $ in \eqref{diag}, we run Algorithm \ref{AOS} to obtain the delay observer $ \mathcal{O}_{E_o}^{\rm diag}(\hat{G}) $ shown in Fig. \ref{FObsdiag}, where one can see that only the delay of $ \mathbf{e_1} $ (denoted with bold) in $ e_3uf\mathbf{e_1}e_2e_2 $ is recorded.
	The delay value ``$ 0 $" in $ (6,3,0) $ indicates that the occurrence of $ \mathbf{e_1} $ that helps to distinguish $ e_3uf\mathbf{e_1}e_2e_2 $ from other system trajectories $ s'\in\mathcal{L}(G): \hat{\alpha}(s')\cap\hat{X}^H\neq\emptyset $ has been received, which implies that the fault can be diagnosed, as shown in Example \ref{EKTD}.
	On the other hand, given $ \hat{X}^F, \hat{X}^H $ in \eqref{prog}, we obtain $ \mathcal{O}_{E_o}^{\rm diag}(\hat{G}) $ shown in Fig. \ref{FObsprog} with Algorithm \ref{AOS}, where the delay of $ \mathbf{e_3} $ in $ \mathbf{e_3}ufe_1e_2e_2 $ is recorded.
	\hfill \ensuremath{\Box}
\end{example}

\begin{figure}[htbp]
	\centering
	\subfigure[$ \hat{G} $]
	{\label{FFA}
		\includegraphics[height=1.7cm]{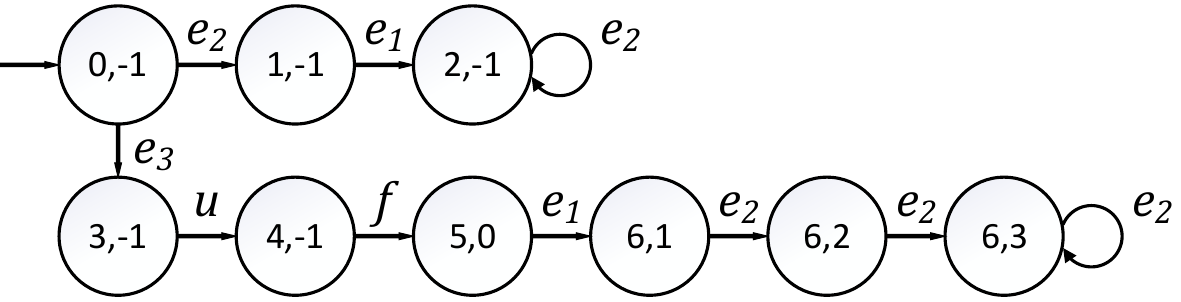}}
	\subfigure[$ \mathcal{O}_{E_o}^{\rm diag}(\hat{G}) $]
	{\label{FObsdiag}
		\includegraphics[height=1.7cm]{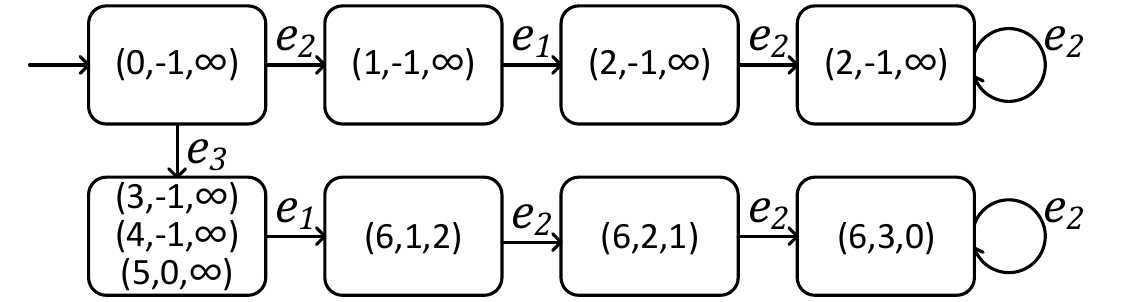}}
	\subfigure[$ \mathcal{O}_{E_o}^{\rm prog}(\hat{G}) $]
	{\label{FObsprog}
		\includegraphics[height=1.7cm]{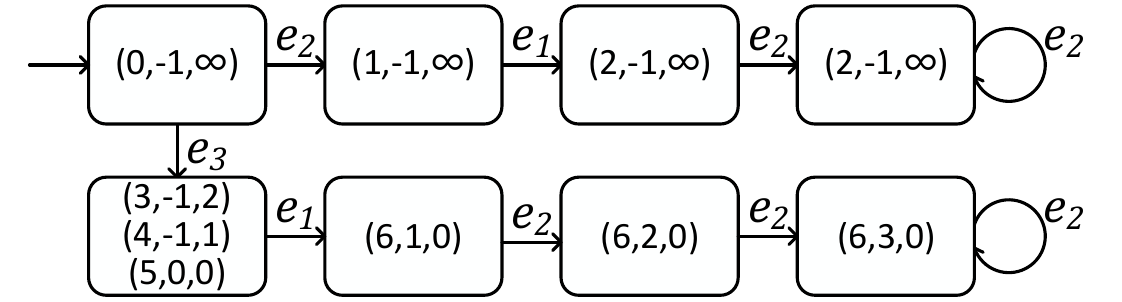}}
	\caption{Fault automaton $ \hat{G} $, and the delay observers $ \mathcal{O}_{E_o}^{\rm diag}(\hat{G}) $ and $ \mathcal{O}_{E_o}^{\rm prog}(\hat{G}) $ for fault diagnosis and prognosis with $ \lceil \frac{D}{T} \rceil=2 $.}
\end{figure}

%
%

\subsection{Verification of $ K^T\! $-diagnosability and $T$-constrained prognosability}\label{SV}

Using all structures above, we initiate the verification process of $ K^T $-diagnosability and $T$-constrained prognosability.

In Section \ref{SDO}, we embed all delays of necessary events into the delay observer, in which all states will be checked by the following verification function.
Define the mapping $ \psi : y \rightarrow \{H,F\} $ as follows:
$\forall y \in Y, $
\begin{equation}\label{vf}
\psi(y) = \left\{
\begin{aligned}
F , & \quad \text{if } y \in \mathcal{VC};\\
H , & \quad \text{otherwise};
\end{aligned} 
\right.
\end{equation}
where the condition $ y \in \mathcal{VC} $ is defined as:
\begin{equation}\label{vc}
\exists (\hat{x},u)\in y, \text{ s.t. } \hat{x}\in \hat{X}^{F}, u > 0.
\end{equation}
Now we are in the position to verify $ K^T $-diagnosability and $T$-constrained prognosability with the following theorem.

\begin{theorem}\label{TKT}
	Let $G$ in (\ref{S}) be the system model, $f$ be the fault events, $ D $ be the transmission distance, $ \mathcal{O}_{E_o}(\hat{G}) $ be the delay observer built with Algorithm \ref{AOS}.
	Then, if $ \hat{X}^F $ and $ \hat{X}^H $ are as in \eqref{diag}, $ \mathcal{L}(G) $ is $ K^{T} $-diagnosable w.r.t. $ f $ and $ D $ if and only if 
	\begin{equation}\label{SI}
	\forall y \in Y, \psi(y) = H.
	\end{equation}
	If $ \hat{X}^F $ and $ \hat{X}^H $ are as in \eqref{prog}, $ \mathcal{L}(G) $ is $T$-constrained prognosable w.r.t. $ f $ and $ D $ if and only if \eqref{SI} holds.
\end{theorem}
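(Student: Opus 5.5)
The plan is to prove both equivalences at once, since Algorithm \ref{AOS} and the verification function $\psi$ in \eqref{vf} depend only on the pair $(\hat{X}^F,\hat{X}^H)$. For the diagnosis case I will rely on three facts: $\Delta(s)=K$ iff $\hat{\alpha}(s)\subseteq\hat{X}^F$ with $\hat{X}^F,\hat{X}^H$ as in \eqref{diag}; $\Delta(t)=-1$ iff $\hat{\alpha}(t)\in\hat{X}^H$; and $\mathcal{L}(\hat{G})=\mathcal{L}(G)$. For the prognosis case the analogous facts are $\Delta(s)=0$ iff $\hat{\alpha}(s)\in\hat{X}^F$ with the sets as in \eqref{prog}, and that $\hat{X}^H$ in \eqref{prog} consists exactly of the states from which a healthy continuation of unbounded length exists. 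This is precisely the negation of the clause ``$\exists n$, $\forall t'$ with $|t'|\ge n$, $\Delta(tt')\ge 0$''. With these facts, each of \eqref{KTCDC} and \eqref{TPC} reduces to one abstract condition. For every $s$ reaching $\hat{X}^F$ there must be a prefix $s'$ such that no $t$ reaching $\hat{X}^H$ has a delayed observation window $P_{E_o}(\zeta^{\lceil D/T\rceil}_{E_o}(\cdot))$ meeting that of $s'$. In the diagnosis case $s'=s$ is forced; in the prognosis case $s'$ may be any prefix.

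The core step is an invariant for Algorithm \ref{AOS}, proved by induction on the length of the observation $\omega\in E_o^*$. The claim is that $(\hat{x},u)\in\beta(y_0,\omega)$ iff there is a string $s$ with $P_{E_o}(s)=\omega$ and $\hat{x}\in\hat{\alpha}(s)$ such that $u$ takes one of three values, all measured along $s$. If no prefix of $s$ ended with a ``recorded'' event, then $u=\infty$. Here an event is recorded if it lies in $E^{\rm Rec}$ at that observer state, i.e.\ every one-step observable successor estimate avoids $\hat{X}^H$. Otherwise $u=\max(\lceil D/T\rceil-m,0)$, where $m$ is the number of events executed since the first recorded event. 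The base case is RECORD1 from $y_0$, and the inductive step is RECORD2. I also need the standard observer property that the $\hat{X}$-projection of $\beta(y_0,\omega)$ equals the usual current-state estimate. A recorded event is exactly the first observable event after which the estimate excludes $\hat{X}^H$. So the condition $u>0$ says: $s$ separates from every healthy trajectory, but the separating observation is still fewer than $\lceil D/T\rceil$ steps old. That is, it lies inside the delay window, so some healthy $t$ shares an element of $P_{E_o}(\zeta^{\lceil D/T\rceil}_{E_o}(\cdot))$ with $s$.

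The two directions then follow. Suppose some $y$ has $\psi(y)=F$, witnessed by $(\hat{x},u)$ with $\hat{x}\in\hat{X}^F$ and $u>0$. Take the associated $s$ from the invariant, extend it unobservably if needed, and truncate the window at the recorded event. This gives a healthy $t$ whose observation equals $P_{E_o}$ of some prefix of $s$ within the last $\lceil D/T\rceil$ steps, so \eqref{KTCDC} or \eqref{TPC} fails. Conversely, suppose the condition fails for some $s$. Then for every prefix, in particular $s$ itself, a healthy $t$ shares a windowed observation. Hence either no recorded event has occurred along $s$, giving $u=\infty>0$, or the recorded event is within the last $\lceil D/T\rceil$ steps, giving $u>0$. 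In both cases the state $(\hat{\alpha}(s),u)\in\beta(y_0,P_{E_o}(s))$ violates \eqref{SI}. For prognosis the existential over prefixes $s'\in\bar{s}$ is handled the same way: it is satisfied iff the recorded event along $s$ occurs at least $\lceil D/T\rceil$ steps before $s$ ends, which is again $u=0$.

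I expect the main obstacle to be the invariant itself, in particular the exact off-by-one accounting between ``$u$ decremented per event (observable or not)'' and the window $|s'|\ge|s|-n$ in $\zeta^n_{E_o}$. A further difficulty is that $E^{\rm Rec}$ is computed per observer state rather than per trajectory. My first attempt will be to define the recorded position semantically as the earliest prefix whose observation estimate is disjoint from $\hat{X}^H$. I will then check that RECORD2 sets $u=\lceil D/T\rceil$ exactly there, and that decrements through RECORD1 and RECORD2 match event counts, treating nondeterminism by quantifying over witnesses $s$.
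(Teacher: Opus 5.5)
Your proposal follows essentially the paper's own route. Both directions argue through a witness string and the delay value it carries. Both use the same case split: either a pair has $u=\infty$, meaning the observation is still shared with a string reaching $\hat X^H$, or $u$ is finite and positive, meaning the separating observable event lies within the last $\lceil D/T\rceil$ steps. Your explicit invariant on $\beta(y_0,\omega)$ makes rigorous what the paper only asserts by citing lines 25 and 32 of Algorithm~\ref{AOS}, and your prefix-monotonicity reduction handles the prognosis case that the paper dismisses as ``similar.''

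One caveat affects you and the paper alike. In the prognosis case, reading $u=\infty$ as ``still confusable with $\hat X^H$'' requires the estimate in $y_0$ to meet $\hat X^H$, and without that the statement fails. Take $\alpha(0,f)=1$, $\alpha(1,a)=1$, $E_o=\{a\}$: every healthy string is doomed, so \eqref{TPC} holds vacuously, yet $((1,0),\infty)\in y_0$ gives $\psi(y_0)=F$. This nondegeneracy therefore has to be added as a hypothesis.
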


\begin{proof}
	Let us only prove the $ K^T $-diagnosability of the theorem, as the proof for $T$-constrained prognosability follows similar lines.
	
	(\underline{Necessity}):
	Seeking a contradiction argument, let us first suppose that $ \mathcal{L}(G) $ is $ K^{T} $-diagnosable w.r.t. $f$ and $ D $ while $ \exists y \in Y $, s.t. $ \psi(y) = F $. 
	Then, we have that $ y $ satisfies (\ref{vc}).	
	
	We now consider event strings $ s\in \mathcal{L}(\hat{G}): \hat{x}^k \in \hat{\alpha}(s) $ and $ s'\in \bar{s}: |s|-|s'|=\lceil \frac{D}{T} \rceil $, where $ \hat{x} $ is in \eqref{vc}.
	Then, we have that $ s'\in \zeta_{E_o}^{\lceil \frac{D}{T} \rceil}(s) $.
	Since $ u > 0 $, we can obtain from the procedures \texttt{RECORD1} and \texttt{RECORD2} (cf. lines 25 and 32) in Algorithm \ref{AOS} that there exists $ (\hat{x}'',\infty) \in \beta(P_{E_o}(s')) $ such that $ |\hat{x}''|_f=-1 $;
	otherwise, we have that $ \forall (\hat{x}',u')\in \beta(P_{E_o}(s')),  |\hat{x}'|_f\geq 0 $, which is impossible since the delay $ \infty $ will never appear in such an observer state according to the procedure \texttt{RECORD2}. 
	Then, we have that there exists $ s''\in \mathcal{L}(\hat{G}): \hat{x}''\in \hat{\alpha}(s'') $ such that $ P_{E_o}(s'')=P_{E_o}(s') \in P_{E_o}(\zeta_{E_o}^{\lceil \frac{D}{T} \rceil}(s'')) \cap P_{E_o}(\zeta_{E_o}^{\lceil \frac{D}{T} \rceil}(s)) $.
	Finally, with $ \Delta(s)=K $ and $ \Delta(s'')=-1 $, we conclude that $ \mathcal{L}(G) $ is not $ K^{T} $-diagnosable w.r.t. $f$ and $ D $, resulting in a contradiction.
	
	(\underline{Sufficiency}):
	Seeking a contradiction argument, let us suppose that $ \forall y \in Y, \psi(z) = H $ while $ \mathcal{L}(G) $ is not $ K^{T} $-diagnosable w.r.t. $f$ and $ D $. 
	Then, we have that there exists $ s,s' \in \mathcal{L}(G): \Delta(s) = K, \Delta(s') = -1 $ such that	$ P_{E_o}(\zeta_{E_o}^{\lceil \frac{D}{T} \rceil}(s)) \cap P_{E_o}(\zeta_{E_o}^{\lceil \frac{D}{T} \rceil}(s')) \neq \emptyset $.
	
	We first consider the case that there exists $ s''\in \mathcal{L}(\hat{G}): P_{E_o}(s'') = P_{E_o}(s) $ such that $ \Delta(s'')=-1 $.
	According to the procedures \texttt{RECORD1} and \texttt{RECORD2} (cf. lines 25 and 32) in Algorithm \ref{AOS}, there exists $ (\hat{x},u)\in \beta(P_{E_o}(s)) $ such that $ |\hat{x}|_f=K $ and $ u=\infty $, implying $ \psi(\beta(P_{E_o}(s))) = F $, resulting in a contradiction.
	
	We then consider the case that $ \forall s''\in \mathcal{L}(\hat{G}): P_{E_o}(s'') = P_{E_o}(s),  \Delta(s'')\neq-1 $.
	In this case, there must exist $ s^k\in \bar{s}, e \in E_o : s^ke\in \bar{s} $ such that $ \exists s^{-1}\in \bar{s}' $, s.t. $ P_{E_o}(s^{-1})=P_{E_o}(s^k) \in P_{E_o}(\zeta_{E_o}^{\lceil \frac{D}{T} \rceil}(s')) \cap P_{E_o}(\zeta_{E_o}^{\lceil \frac{D}{T} \rceil}(s)) $. 
	Note that $ \Delta(s^{-1})=-1 $.
	From the definition of $ \zeta $, we can obtain that $ s^k\in \zeta_{E_o}^{\lceil \frac{D}{T} \rceil}(s) $, which implies that $ |s|-|s^ke|<\lceil \frac{D}{T} \rceil $ since $ e\in\omega(s^k) $ and $ |s|-|s^ke|\geq \lceil \frac{D}{T} \rceil $ would lead to $ s^k\notin \zeta_{E_o}^{\lceil \frac{D}{T} \rceil}(s) $.
	According to the procedures \texttt{RECORD1} and \texttt{RECORD2} (cf. lines 25 and 32) in Algorithm \ref{AOS}, there exists $ (\hat{x}_1,u_1)\in \beta(P_{E_o}(s^ke)) $ such that $ u_1=\infty $ or $ \lceil \frac{D}{T} \rceil $.
	Since $ |s|-|s^ke|<\lceil \frac{D}{T} \rceil $, there exists $ (\hat{x}_2,u_2)\in \beta(P_{E_o}(s)) $ such that $ |\hat{x}_2|_f=K $ and $ u_2>u_1-\lceil \frac{D}{T} \rceil \geq 0 $, implying $ \psi(\beta(P_{E_o}(s))) = F $, resulting in a contradiction, which completes the proof.
\end{proof}

\begin{remark}
	(Complexity analysis).
	The computational complexity of the proposed verification methods primarily arises from the construction of the delay observer, which has a complexity similar to that of the observer \cite{cassandras2009introduction} in the literature.
	Indeed, the only difference between these two observers lies in the addition of an extra delay value in the states.
\end{remark}

\begin{example}\label{EKT}(Verifying $ K^T $-diagnosability and $T$-constrained prognosability).
	Consider the delay observer in Fig. \ref{FObsdiag} and \ref{FObsprog}.
	We use \eqref{vf} to check the states in $ \mathcal{O}_{E_o}^{\rm diag}(\hat{G}) $ and $ \mathcal{O}_{E_o}^{\rm prog}(\hat{G}) $.
	We know that in diagnosis problem, $ \hat{X}^F=\{\hat{x}\in \hat{X}\mid |\hat{x}|_f=3\} $ while in prognosis problem, $ \hat{X}^F=\{\hat{x}\in \hat{X}\mid |\hat{x}|_f=0\} $.
	We have $ \psi(\{(6,3,0)\})=H $ in $ \mathcal{O}_{E_o}^{\rm diag}(\hat{G}) $ and $ \psi(\{(3,-1,2),(4,-1,1),(5,0,0)\})=H $ in $ \mathcal{O}_{E_o}^{\rm prog}(\hat{G}) $, indicating that $ \mathcal{L}(G) $ is $ K^{T} $-diagnosable and $ T $-constrained prognosable w.r.t. $f$ and $ D $ for $ K=3 $, $ T=2 $, $ D=3 $ as shown in Example \ref{EKTD} and \ref{ETDP}.
	\hfill \ensuremath{\Box}
\end{example}

\section{Conclusion}\label{8}

This work extended and solved fault diagnosis and prognosis in discrete event systems in the delayed observation scenario.
In order to make this extension possible, we proposed a new diagnosability notion, namely $ K^T $-diagnosability, as well as a new prognosability notion, namely $T$-constrained prognosability, extended from a restated prognosability definition to avoid conservativeness.
To handle the delayed observations, a delay observer was constructed, that allows to verify the aforementioned conditions of diagnosability and prognosability.
Interesting directions for future work are to consider multiple delays, dynamic observations and stochastic settings.

\bibliographystyle{IEEEtran}
\bibliography{References}

@book{cassandras2009introduction,
  title={Introduction to discrete event systems},
  author={Cassandras, Christos G and Lafortune, Stephane},
  year={2009},
  publisher={Springer Science \& Business Media}
}

@article{sampath1995diagnosability,
  title={Diagnosability of discrete-event systems},
  author={Sampath, Meera and Sengupta, Raja and Lafortune, St{\'e}phane and Sinnamohideen, Kasim and Teneketzis, Demosthenis},
  journal={IEEE Transactions on Automatic Control},
  volume={40},
  number={9},
  pages={1555--1575},
  year={1995},
  publisher={IEEE}
}

@article{cassez2008fault,
  title={Fault diagnosis with static and dynamic observers},
  author={Cassez, Franck and Tripakis, Stavros},
  journal={Fundamenta Informaticae},
  volume={88},
  number={4},
  pages={497--540},
  year={2008},
  publisher={IOS Press}
}

@article{yin2015codiagnosability,
  title={Codiagnosability and coobservability under dynamic observations: Transformation and verification},
  author={Yin, Xiang and Lafortune, St{\'e}phane},
  journal={Automatica},
  volume={61},
  pages={241--252},
  year={2015},
  publisher={Elsevier}
}

@article{debouk2000coordinated,
  title={Coordinated decentralized protocols for failure diagnosis of discrete event systems},
  author={Debouk, Rami and Lafortune, St{\'e}phane and Teneketzis, Demosthenis},
  journal={Discrete Event Dynamic Systems},
  volume={10},
  number={1-2},
  pages={33--86},
  year={2000},
  publisher={Springer}
}

@article{yin2019robust,
  title={Robust fault diagnosis of stochastic discrete event systems},
  author={Yin, Xiang and Chen, Jun and Li, Zhaojian and Li, Shaoyuan},
  journal={IEEE Transactions on Automatic Control},
  volume={64},
  number={10},
  pages={4237--4244},
  year={2019},
  publisher={IEEE}
}

@article{qiu2008distributed,
  title={Distributed diagnosis under bounded-delay communication of immediately forwarded local observations},
  author={Qiu, Wenbin and Kumar, Ratnesh},
  journal={IEEE Transactions on Systems, Man, and Cybernetics-Part A: Systems and Humans},
  volume={38},
  number={3},
  pages={628--643},
  year={2008},
  publisher={IEEE}
}

@article{takai2011distributed,
  title={Distributed failure prognosis of discrete event systems with bounded-delay communications},
  author={Takai, Shigemasa and Kumar, Ratnesh},
  journal={IEEE Transactions on Automatic Control},
  volume={57},
  number={5},
  pages={1259--1265},
  year={2011},
  publisher={IEEE}
}

@article{lafortune2018history,
  title={On the history of diagnosability and opacity in discrete event systems},
  author={Lafortune, St{\'e}phane and Lin, Feng and Hadjicostis, Christoforos N},
  journal={Annual Reviews in Control},
  volume={45},
  pages={257--266},
  year={2018},
  publisher={Elsevier}
}

@article{lin1994diagnosability,
  title={Diagnosability of discrete event systems and its applications},
  author={Lin, Feng},
  journal={Discrete Event Dynamic Systems},
  volume={4},
  number={2},
  pages={197--212},
  year={1994},
  publisher={Springer}
}

@article{nunes2018codiagnosability,
  title={Codiagnosability of networked discrete event systems subject to communication delays and intermittent loss of observation},
  author={Nunes, Carlos EV and Moreira, Marcos V and Alves, Marcos VS and Carvalho, Lilian K and Basilio, Jo{\~a}o Carlos},
  journal={Discrete Event Dynamic Systems},
  volume={28},
  number={2},
  pages={215--246},
  year={2018},
  publisher={Springer}
}

@article{genc2009predictability,
  title={Predictability of event occurrences in partially-observed discrete-event systems},
  author={Genc, Sahika and Lafortune, St{\'e}phane},
  journal={Automatica},
  volume={45},
  number={2},
  pages={301--311},
  year={2009},
  publisher={Elsevier}
}

@article{watanabe2021fault,
  title={Fault prognosis of discrete event systems: An overview},
  author={Watanabe, Ana TY and Sebem, Renan and Leal, Andre B and Hounsell, Marcelo da S},
  journal={Annual Reviews in Control},
  volume={51},
  pages={100--110},
  year={2021},
  publisher={Elsevier}
}

@article{kumar2009decentralized,
  title={Decentralized prognosis of failures in discrete event systems},
  author={Kumar, Ratnesh and Takai, Shigemasa},
  journal={IEEE Transactions on Automatic Control},
  volume={55},
  number={1},
  pages={48--59},
  year={2009},
  publisher={IEEE}
}

@article{liu2009diagnosability,
  title={Diagnosability of fuzzy discrete-event systems: A fuzzy approach},
  author={Liu, Fuchun and Qiu, Daowen},
  journal={IEEE Transactions on Fuzzy Systems},
  volume={17},
  number={2},
  pages={372--384},
  year={2009},
  publisher={IEEE}
}

@article{benmessahel2017predictability,
  title={Predictability of fuzzy discrete event systems},
  author={Benmessahel, Bilal and Touahria, Mohamed and Nouioua, Farid},
  journal={Discrete Event Dynamic Systems},
  volume={27},
  pages={641--673},
  year={2017},
  publisher={Springer}
}

@article{paoli2011active,
  title={Active fault tolerant control of discrete event systems using online diagnostics},
  author={Paoli, Andrea and Sartini, Matteo and Lafortune, St{\'e}phane},
  journal={Automatica},
  volume={47},
  number={4},
  pages={639--649},
  year={2011},
  publisher={Elsevier}
}

@article{watanabe2017safe,
  title={Safe controllability using online prognosis},
  author={Watanabe, Ana TY and Leal, Andr{\'e} B and Cury, Jos{\'e} ER and de Queiroz, Max H},
  journal={IFAC-PapersOnLine},
  volume={50},
  number={1},
  pages={12359--12365},
  year={2017},
  publisher={Elsevier}
}

@article{he2024asynchronous,
  title={Asynchronous Fault Diagnosis of Stochastic Discrete-Event Systems in Industrial Applications},
  author={He, Jiahan and Wang, Deguang and Yang, Ming and Hu, Yuhong},
  journal={IEEE Sensors Journal},
  year={2024},
  publisher={IEEE}
}

@article{liao2022robust,
  title={Robust predictability of stochastic discrete-event systems and a polynomial-time verification},
  author={Liao, Hui and Liu, Fuchun and Wu, Naiqi},
  journal={Automatica},
  volume={144},
  pages={110477},
  year={2022},
  publisher={Elsevier}
}

@article{wang2024distributed,
  title={Distributed Fault Diagnosis in Discrete Event Systems with Transmission Delay Impairments},
  author={Wang, Jiwei and Baldi, Simone and Yu, Wenwu and Yin, Xiang},
  journal={IEEE Transactions on Automatic Control},
  year={2024},
  publisher={IEEE}
}

@article{liu2021online,
  title={Online supervisory control of networked discrete event systems with control delays},
  author={Liu, Zhaocong and Yin, Xiang and Shu, Shaolong and Lin, Feng and Li, Shaoyuan},
  journal={IEEE Transactions on Automatic Control},
  volume={67},
  number={5},
  pages={2314--2329},
  year={2021},
  publisher={IEEE}
}

@article{viana2021codiagnosability,
  title={Codiagnosability of networked discrete event systems with timing structure},
  author={Viana, Gustavo S and Alves, Marcos VS and Basilio, Jo{\~a}o Carlos},
  journal={IEEE Transactions on Automatic Control},
  volume={67},
  number={8},
  pages={3933--3948},
  year={2021},
  publisher={IEEE}
}

\end{document}